\documentclass[colorlinks=true]{elsarticle}
  \usepackage{amssymb,amsfonts, amsthm, amstext,mathrsfs}
\usepackage[usenames,dvipsnames]{xcolor}

\usepackage[tbtags]{amsmath}

\usepackage[T1]{fontenc} 
\usepackage{hyperref} 
\usepackage{mathrsfs,times}
 \usepackage[text={130mm,200mm},centering]{geometry} 

\newcommand{\be}{\begin{equation}
\begin{aligned}[b]}
\newcommand{\ee}{\end{aligned}\end{equation}}
  \newcommand{\ben}{\begin{equation} \nonumber\begin{aligned}}

 \newcommand{\A}{\mathfrak{A}}
\newcommand{\B}{\mathfrak{B}}
 \newcommand{\R}{\mathbb{R}}
\newcommand{\C}{\mathfrak{C}}
\newcommand{\D}{\mathcal{D}}

\newcommand{\N}{\mathbb{N}}

\newcommand{\T}{\mathcal{T}}

\renewcommand{\d}{{\rm d}}

\newcommand{\dist}{{\rm dist}}

\newcommand{\bproof}{\begin{proof}}
\newcommand{\eproof}{\end{proof}}

\newcommand{\xto}{\xrightarrow}

\renewcommand{\(}{\left(}
\renewcommand{\)}{\right)}

\journal{a journal}

 \usepackage{thmtools}
\numberwithin{equation}{section}
\newtheorem{theorem}{Theorem}[section]
 \declaretheorem[name=Lemma, sibling=theorem]{lemma}
  
    \declaretheorem[name=Proposition, sibling=theorem]{proposition}
     \declaretheorem[name=Example, sibling=theorem]{example}

\theoremstyle{definition}
    \declaretheorem[name=Definition, sibling=theorem]{definition} 
     
\theoremstyle{remark}
     \declaretheorem[name=Remark, sibling=theorem]{remark}

    \usepackage{mathabx}
        
\newcommand{\ceqref}[1]{{\color{blue}\eqref{#1}}}
\newcommand{\cref}{\autoref}

\begin{document}
\begin{frontmatter}

 \title{Analysis of the dynamics of Caputo fractional differential equations}
  
\author[hust]{Hongyong Cui}\ead{h.cui@outlook.com} 
\address[hust]{School of Mathematics and Statistics \&  Hubei Key Laboratory of Engineering Modeling and Scientific Computing, \\ 
 Huazhong University of Science \& Technology, 
 Wuhan 430074, China}

\author[de]{Peter E. Kloeden} 
\ead{kloeden@math.uni-frankfurt.de}

\address[de]{Mathematisches Institut, Universität Tübingen, 72076 Tübingen, Germany}
 
 \author[jn]{Jie Xin}
 \ead{fdxinjie@sina.com}
 \address[jn]{School of Information Engineering, 
 Shandong Youth University of Political Science, Jinan 250103, China}

\begin{keyword} Global attractor, Caputo fractional differential equation, asymptotic behavior 
\MSC[2020] 34A08, 34D45, 45D05, 45M05.
\end{keyword}

\begin{abstract} 
   It is  known that a finite-dimensional Caputo fractional differential equation, though itself need not generate a semiflow,  can be represented as   a    Volterra integral equation which  generates an infinite-dimensional semiflow on the space $\C=C([0,\infty); \R^d)$ under the  standard compact-open topology. In this paper we  construct a  compact absorbing set  and    an  attractor for this semiflow on $\C$, and then  prove that  the attractor consists of equi globally H\"older continuous functions.  This strengthens the previous work of Doan \& Kloeden  \cite{DK21} where a bounded (with respect to a weighted norm) attractor was constructed.   
   \end{abstract}
%
%
\end{frontmatter}
     
  \tableofcontents

\section{Introduction}

We consider a Caputo fractional differential equation (FDE)  on $\R^d$ $(d\in \N)$: 
\begin{align}
\label{cfde}
\begin{cases} 
 ^C\! D_{0+}^\alpha x(t)   =g(x(t)),\quad t>0,  \\
x(0)  =x_0\in \R^d, 
\end{cases} 
\end{align}
where  $^C\! D_{0+}^\alpha$ denotes the Caputo type fractional differential operator in time with  index $\alpha\in (0,1)$ and $ g:\R^d \to \R^d  $  is  assumed  to be locally Lipschitz and to satisfy  the   dissipative condition
\begin{equation} \label{assu} 
 \big<x, g(x) \big>  \leq a - b\| x\|^2,\quad \forall x\in \R^d ,
\end{equation}  
for some constants $a,b>0$. Here  $\big< \cdot, \cdot\big> $  and  $\|\cdot\|$ are the   standard   inner product and norm of $\R^d$, respectively.

 FDEs have   enormous numbers of very interesting and novel applications in physics, chemistry, engineering, finance and other sciences in  modeling, e.g., the diffusion processes, mechanical properties of materials, signal processing and the behaviour of viscoelastic and viscoplastic materials under external influences, etc., see for instance \cite{diethelm}   where a number of references for these applications are listed. Many fractional generalizations of integral derivatives are available in the literature, while  two of them, the Riemann–Liouville derivative and the
Caputo derivative have been discussed most.  In view of dynamical systems, Caputo derivatives have an advantage  over the Riemann–Liouville that  Caputo FDEs use integral order initial conditions (see, e.g., \ceqref{cfde}) and Caputo derivatives of constant functions are zero. This makes Caputo FDEs  more physically meaningful in modeling the memory effects arising in  the real-world, see, e.g. monographs \cite{diethelm,kilbas,kubica}.  

However, unlike ODEs, the solutions of a Caputo FDE   do not satisfy the semiflow property in general:  let $x(t, x_0)$ be the solution of \ceqref{cfde} at time $t$, then,  generally,  
\[
  x(t+s, x_0) \neq x(t, x(s,x_0)), \quad t>s>0. 
\]
As a consequence,  Caputo FDEs need   not generate a dynamical system, and   the  mathematical analysis  on Caputo FDEs in the literature mainly emphasizes the well-possedness, regularity and numerical simulations of solutions (e.g., \cite{liu,WXK,XZC,zhou}), and, for the dynamical perspective, the stability and the stable manifolds which can  be studied by direct analysis on the solutions (e.g., \cite{congTT,LZ,TT,WFZ}). In other words, due to the lack of semiflow property,   the general theory of dynamical systems is not applicable to Caputo FDEs which causes a challenging  difficulty in  studying the dynamical  complexity of Caputo FDEs. 

Remarkable progress was made by Cong \& Tuan \cite{cong} who proved that  distinct solutions of some particular Caputo FDEs (e.g., scalar  or triangular systems of Caputo FDEs) would never intersect and hence the solution mapping $x_0\mapsto x(t,x_0)$ is invertible.  In this case the solutions  generate  a two-parameter semiflow $\varphi$, though the equation is autonomous, by 
\[
\varphi(t, \tau, x_0) := x(t, x^{-1} (\tau, x_0)) ,\quad \forall t\geq \tau \geq 0,\, x_0\in \R^d. 
\]
This makes it possible to study these Caputo FDEs via the theory of nonautonomous dynamical systems. However, such generation of a dynamical system is quite restrictive in general applications, especially in high dimensions. 

Most recently, Doan \& Kloeden \cite{DK21,doanklo2,DK24}  proposed a second possibility to associate a dynamical system to Caputo FDEs,  feeding the solution of the  finite-dimensional FDE into an infinite-dimensional semiflow,   see also the recent monograph \cite{DKT}. Essentially, their approach is based on the  observation that  the solution $x(t,x_0)$ of Caputo FDE \ceqref{cfde}  coincides with the solution 
 $x_f(t)$ of the 
 Volterra integral equation   \cite{MS}   in $\C: =C(\R^+, \R^d)   $ 
 \begin{equation}  \label{eq_Vb}
x_f(t)= f(t) +\frac{1}{\Gamma(\alpha)}
\int_0^t (t-s)^{\alpha -1} g(x_f(s)) \; \d s  
\end{equation} 
(here  $\Gamma (\cdot) $ is the Gamma function) when $ f(t) \equiv x_0$, that is, denoted by $id_{x_0}$ the constant function $id_{x_0}(t)\equiv x_0$, then 
\[
 x(t,x_0) =x_{id_{x_0} } (t). 
\]
 
Following    Sell \cite{sell},  Doan \& Kloeden \cite{DK21} showed that the Volterra operators
 $\{ \T_t\}_{t\geq 0}$ on $\C $ defined by
\begin{equation}  \notag
 (\T_t f) ( { \theta}) =f(t+{ \theta}) +\frac{1}{\Gamma(\alpha)}
\int_0^t (t+{ \theta} -s)^{\alpha -1} g(x_f(s)) \; \d s ,\quad f\in \C,\, \theta \in \R^+,
\end{equation}  
 where $x_f$ is the solution of Volterra integral equation \ceqref{eq_Vb}, form a    semiflow on $\C$. 
This means that the solutions $x(t,x_0)$ of the Caputo FDE \ceqref{cfde}  can be `fed' into the semiflow $\{ \T_t\}_{t\geq 0}$ on $\C $ acting on  constant functions $id_{x_0}$: 
\begin{equation}  \notag
 (\T_t  id_{x_0} )(0) =x(t,x_0) ,\quad \forall t\geq 0, \, x_0\in \R^d.  
\end{equation} 
Specifically, such solutions drive  the semiflow $\{ \T_t\}_{t\geq 0}$ . Hence,  the dynamical analysis of $\{ \T_t\}_{t\geq 0}$ on  constant functions  helps  understand that of the Caputo FDE. 

Let 
\begin{equation}  \notag
 \D :=\left \{  \mathfrak D=\bigcup_{x_0 \in D_0} id_{x_0}:  \text{ $D_0$ is a bounded subset of $\R^d$}   \right \} . 
\end{equation} 
Doan \& Kloeden \cite{doanklo2,DK24} then studied the $\D$-attractor   of the semiflow  $\{ \T_t\}_{t\geq 0}$.  For  technical reasons they equipped the space $\C$ with a weighted norm 
\[
 \|f\|_{\C_\alpha} := \|f(0)\| +\sum_{k=1}^\infty \frac {1} {2^k { k^\alpha} } \sup_{t\in [1/k, k]} \|f(t)\|  ,\quad \forall f\in \C .
\]
Under this norm they proved the existence of a {\em bounded} $\D$-absorbing set and  constructed a  {\em bounded} $\D$-attractor $\A$ for the semiflow  $\{ \T_t\}_{t\geq 0}$ in $\C$. Cui \& Kloeden \cite{CK} applied this technique  also in a nonautonomous setting. However, a counterexample (see \cref{ex})  shows that the space $\C$  under this norm is in fact {\em incomplete}. This incompleteness makes the standard attractor theory  inapplicable and  further analysis becomes technically difficult. 

In this paper, we study the Caputo FDE \ceqref{cfde}  by    the approach of Doan \& Kloeden but with certain  improvements.  We leave the norm $\|\cdot \|_{\C_\alpha}$ but use a carefully chosen metric  $d_\C$ of $\C$ which induces   the standard compact-open topology.    
 Under the same dissipativity condition \ceqref{assu} as in  \cite{doanklo2,DK24}  we prove that the semiflow  $\{ \T_t\}_{t\geq 0}$ has  in fact a {\em compact} $\D$-absorbing set (\cref{thm_main2}) and, as a consequence,  a {\em precompact}  $\D$-attractor $\A$ (\cref{thm_main3}) in $\C$.  In addition, the functions   in $\A$ are equi  globally   H\"older continuous with exponent $\alpha$ (\cref{thm_r}). 
All these results are based on a technical observation that the semiflow  $\{ \T_t\}_{t\geq 0}$ is asymptotically equi-continuous on every bounded set of  constant functions (\cref{prop_main2}).     \medskip
 
{\noindent \em Notation} 
Write $\R^+:=[0,\infty)$  and  denote by $\C : =C(\R^+, \R^d)   $ the space of continuous functions $f:\R^+\to \R^d,$ $ \theta \mapsto f(\theta) $. In particular, for any $x_0\in \R^d$ we denote by $id_{x_0}$ the constant function $id_{x_0} (\theta)\equiv x_0$ in $\C$. For any subset  $\mathfrak D$ of $
 \C$ and $\theta\geq 0$ we write the $\theta$-section  of $\mathfrak D$ as 
  \[
   \mathfrak D (\theta) :=\big\{ f(\theta): f\in \mathfrak D \big\} .
  \]
For any bounded  sets $A,B$ in a metric space $(
X, d_X)$, by $\dist_X(A,B)$ we denote the Hausdorff semimetric between them, i.e.,
$
 \dist_X(A,B) := \sup_{a\in A} \inf_{b\in B} d_{X} (a,b). 
$

\section{Caputo FDE in $\R^d$} 
 
 Due to the fact that  the solutions of Caputo FDE \ceqref{cfde}   do not satisfy a standard semiflow property,  
  they do not generate a standard dynamical system.  This makes it difficult to study  the dynamical behavior  of  the equation  in $\R^d$.

Nevertheless, one can study the solutions directly and can obtain some useful insights about the asymptotic behavior of the solutions.  For instance,  
Tuan \& Trinh \cite{TT} proved a useful inequality 
\begin{equation} \label{eq_TT} 
 ^C\! D_{0+}^\alpha \| x(t)\|^2 \leq 2\big< x(t), ^C\! D_{0+}^\alpha x(t) \big> ,
\end{equation}
for the sooutions of the Caputo FDE  \ceqref{cfde}.   Kloeden \cite{Kloeden} then  used this  inequality along with the dissipativity condition \ceqref{assu}  to derive that  
\begin{align*}
  ^C\! D_{0+}^\alpha \|x(t,x_0)\|^2  & \leq  
 2\big< x(t), g(x(t)) \big>  
 \leq 2a-2b\|x(t)\|^2, 
\end{align*} 
which leads to the energy inequality 
\begin{align} \label{eq_K}
 \|x(t,x_0)\|^2& \leq \|x_0\|^2 E_\alpha(-2b t^\alpha) +\frac ab \big(1- E_\alpha(-2b t^\alpha) \big)  ,
\end{align} 
where $E_\alpha (\cdot) $ denotes the one-parameter Mittag-Leffler function 
\[
E_{\alpha}(z):= \sum_{k=0}^{\infty}\frac{z^k}{\Gamma(\alpha k+1)} ,\quad \forall z\in \mathbb C .
\]

Since $E_\alpha(-2bt^\alpha)\to 0$ as $t\to \infty$ (see, e.g., \cite{kilbas}), 
  from inequality \ceqref{eq_K}  it follows \begin{lemma} \label{lem_abB}
For any $R>0$ there exists a $T^*=T^*(R)>0$ such that the solutions $x(t,x_0)$  of  the Caputo FDE \ceqref{cfde} with $\|x_0\| \leq R$ satisfy
\[
 \|x(t, x_0)\|^2 \leq \frac ab+1,\quad \forall t\geq T^*(R). 
\]
\end{lemma}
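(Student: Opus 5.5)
The plan is to read the bound off the energy inequality \ceqref{eq_K}, using only two facts about $E_\alpha(-2bt^\alpha)$: it is nonnegative, and it tends to $0$ as $t\to\infty$ (as recalled just above). For $\|x_0\|\le R$, inequality \ceqref{eq_K} rearranges to
\[
\|x(t,x_0)\|^2 \le \frac ab + \Big(\|x_0\|^2-\frac ab\Big)E_\alpha(-2bt^\alpha) \le \frac ab + R^2\, E_\alpha(-2bt^\alpha),
\]
where the second step drops the term $-\frac ab E_\alpha(-2bt^\alpha)$. This is legitimate provided $E_\alpha(-2bt^\alpha)\ge 0$. That holds because $t\mapsto E_\alpha(-t)$ is completely monotone for $\alpha\in(0,1)$ (Pollard's theorem), and in particular positive. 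If one prefers to avoid this fact, one can instead bound $\|x_0\|^2-\frac ab$ by $R^2+\frac ab$ and use $|E_\alpha(-2bt^\alpha)|$ in what follows.

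Next, since $E_\alpha(-2bt^\alpha)\to0$ as $t\to\infty$, there is a $T_1$ with $E_\alpha(-2bt^\alpha)\le 1/(R^2+1)$ for all $t\ge T_1$. The one point needing care is that the estimate must hold for \emph{all} $t\ge T^*$, not just along a sequence. Here I would use that $E_\alpha(-2bt^\alpha)$ is monotonically decreasing in $t$, again by complete monotonicity. Then the first time the threshold is reached already works for all later times. Alternatively, the definition of the limit directly gives a $T_1$ beyond which the bound holds, so monotonicity is not even required.

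Setting $T^*(R):=T_1$, we get for all $t\ge T^*$ and $\|x_0\|\le R$
\[
\|x(t,x_0)\|^2\le \frac ab+\frac{R^2}{R^2+1}\le \frac ab+1 .
\]
Because the bound depends on $x_0$ only through $R$, $T^*$ is uniform over the ball $\{\|x_0\|\le R\}$. There is no real obstacle here; the only subtle point is the sign (or absolute value) of the Mittag-Leffler term, which is handled as described above.
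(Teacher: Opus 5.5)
Your argument is correct and is essentially the paper's own: the paper simply asserts that the lemma follows from the energy inequality \ceqref{eq_K} together with $E_\alpha(-2bt^\alpha)\to 0$, and you supply the details it leaves implicit, namely the positivity of $E_\alpha(-2bt^\alpha)$ (Pollard) and the uniformity of $T^*$ over $\|x_0\|\le R$. One small slip, in your positivity-free fallback only: there the threshold must be $|E_\alpha(-2bt^\alpha)|\le 1/(R^2+a/b)$ rather than $1/(R^2+1)$, and, as you note yourself, monotonicity is unnecessary because the limit already yields a single $T_1$ valid for all later times.
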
 

This means   the Caputo FDE \ceqref{cfde}  has a compact `absorbing set' 
in $\R^d$  given by
\begin{equation} \label{eq5.25.1} 
 B^*:= \left \{ y\in \R^d:  \|y\|\leq \sqrt{ \frac a b+ 1} =:\delta^* \right \} .
\end{equation}
This however does not lead to a global attractor in $\R^d$ because the Caputo FDE does not generate a dynamical system in $\R^d$.  But it does have a precompact attracting set $\Omega^*$ included in $B^*$, given by
\begin{equation} 
 \Omega^* :=  \Big\{ y\in \R^d: \exists \{x_{n}\}_{n\in \N}  \text{ bounded and }  t_n\to \infty \text{ such that }  x(t_n,x_{n})\to y \Big \}    . \label{def_O}
\end{equation}

 Due to the lack of semiflow property of solutions,   no more information about this attracting set is until   now available. For instance,  the compactness, invariance,   or if  solely determined by the solutions starting from the absorbing set $B^*$ remain open.

\section{Feeding the Caputo FDE solution into  the Volterra semiflow} 
 
  Since  the Caputo FDE itself does not generate a semiflow,  motivated by Doan \& Kloeden \cite{DK24} we now study it  by feeding the solutions into an infinite-dimensional semiflow, the Volterra semiflow  on $\C=C(\R^+, \R^d) $, as a bridge.

\subsection{Volterra semiflow on $\C $} 

A key observation is that the   solution of Caputo FDE  \ceqref{cfde}  is an integral equation (with memory) 
 \begin{equation}\label{AIE}
x(t)=x_0+\frac{1}{\Gamma(\alpha)}
\int_0^t (t-s)^{\alpha -1} g(x(s)) \; \d s ,\quad t\geq 0
\end{equation} 
which is a particular case of the Volterra integral equation   of $f$ \begin{equation} \label{eq_V}
x_f(t)= f(t) +\frac{1}{\Gamma(\alpha)}
\int_0^t (t-s)^{\alpha -1} g(x_f(s)) \; \d s  
\end{equation} 
 in $\C =C(\R^+, \R^d)   $ 
when $ f(\theta) \equiv x_0$.  

 Doan \& Kloeden \cite{DK21} using Sell \cite{sell}  showed that the Volterra operators $\{ \T_t\}_{t\geq 0}$ on $\C $ defined by
\begin{equation} \label{dy_sell}
 (\T_t f) ( { \theta}) =f(t+{ \theta}) +\frac{1}{\Gamma(\alpha)}
\int_0^t (t+{ \theta} -s)^{\alpha -1} g(x_f(s)) \; \d s ,\quad f\in \C,\, \theta \in \R^+,
\end{equation}  
form a    semiflow on $\C$, where $x_f$ is the solution of Volterra integral equation \ceqref{eq_V}.  In addition, each $\T_t$ is a  continuous operator in $\C$ with respect to the compact-open topology of $\C$ (i.e.,  the uniform convergence topology on each compact interval)  induced by the metric
\begin{equation} \label{def_rho}
 \varrho(f,h) := \sum_{k=1}^\infty \frac 1{2^k}   \frac{ \sup_{\theta \in[0,k]} \|f(\theta)-h(\theta)\|}{1+\sup_{\theta\in[0,k]} \|f(\theta)-h(\theta)\|},\quad \forall f,h\in \C.
\end{equation} 
Call this semiflow $\{ \T_t\}_{t\geq 0}$  a {\em  Volterra  semiflow}, and    write it as $\T$ for simplicity.

For  a constant function $id_{x_0}(\theta)\equiv x_0$ in $\C $,  the state $\T_tid_{x_0}$ at $t$ of semiflow $\T$  starting from $id_{x_0}$  is then given by
\begin{equation} \label{eq5.25.5}
(\T_t id_{x_0} ) ( { \theta}) =x_0 + \frac{1}{\Gamma(\alpha)}
\int_0^t (t+{ \theta} -s)^{\alpha -1} g(x(s)) \; \d s ,\quad   \theta\geq 0. 
\end{equation}
 Moreover,  the $0$-section  coincides the solution $x(t,x_0)$ of Caputo FDE \ceqref{cfde}, i.e., 
\begin{equation} \label{eq5.25.2}
 (\T_t  id_{x_0} )(0) =x(t,x_0) ,\quad \forall t\geq 0, \, x_0\in \R^d ,
\end{equation} 
by which  the solutions of Caputo FDE \ceqref{cfde} are now fed into the  semiflow  $  \T $ on $\C$, see \cref{fig1}.  
 Therefore, in order to study the   dynamical behavior of the original problem, Caputo FDE \ceqref{cfde}, inspired by \ceqref{eq5.25.2}  one can study the Volterra semiflow $ \T $  but with attention restricted on constant functions $id_{x_0}$.  

  \begin{figure}
	\centering{
  \includegraphics[width=.9\textwidth]{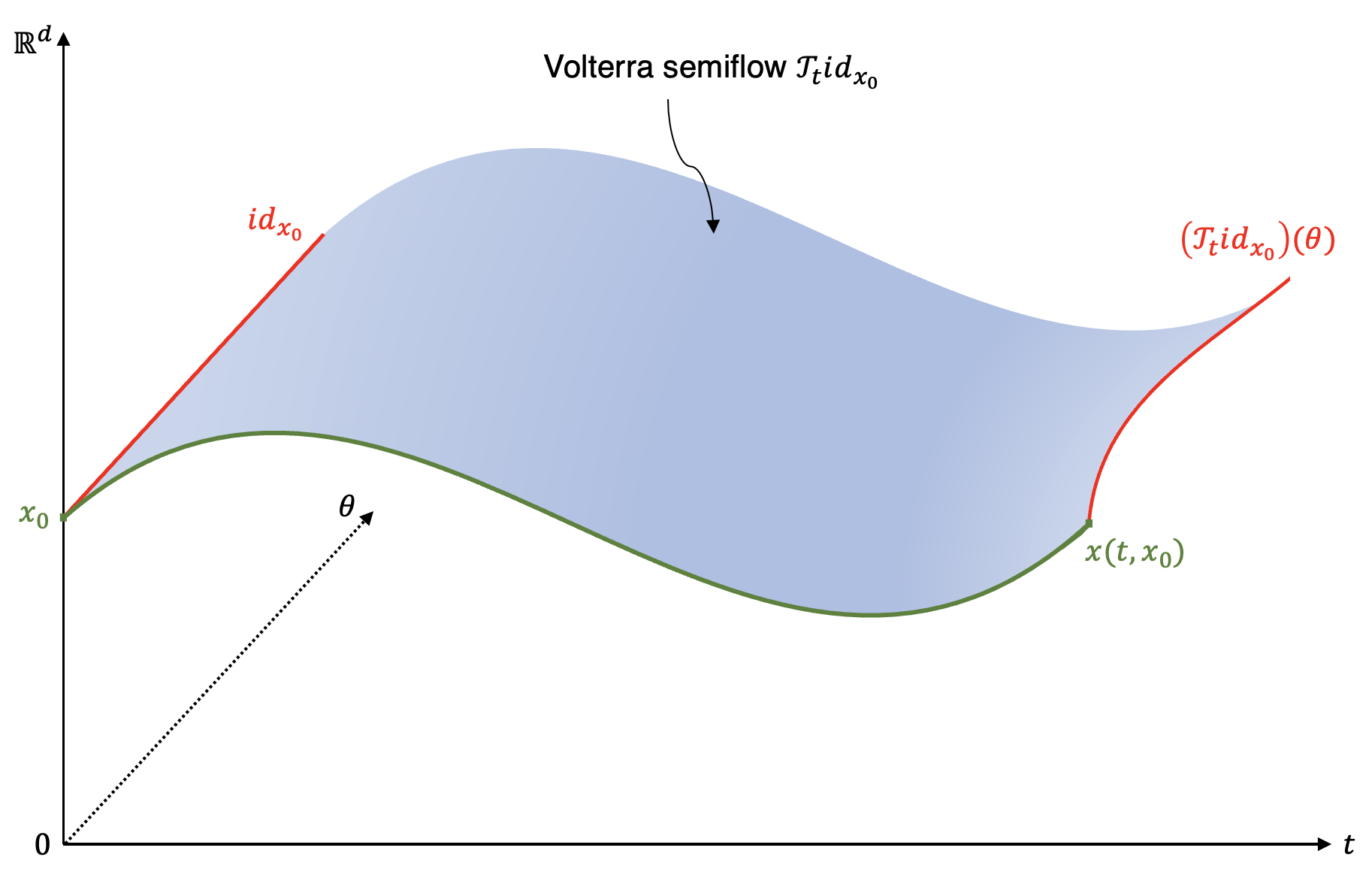}
	\caption{  
	 Solution $x(t,x_0)$ of Caputo FDE  fed into a  Volterra  semiflow $\T$ on $\C=C(\R^+, \R^d)$} \label{fig1}
	}
	\end{figure}

\subsection{An admissible  metric of $\C$} 

Doan \& Kloeden \cite[Lemma 3]{DK24} proved under a  global Lipschitz condition of $g$ the following result.

\begin{lemma}\label{lem:bdd} 
 For any $R>0$ there exists a  time $T=T(R)>0$ such that 
\[
\|(\T_t id_{x_{0}} )(\theta)  \| \leq  c  \theta^\alpha + \delta^*,\quad \forall t\geq T ,\, \| x_0\| \leq R,
\]
for some   $ c>0$ independent of $\R$, 
where $\delta^*$ is the radius of $B^*$ given by \ceqref{eq5.25.1}. 
\end{lemma}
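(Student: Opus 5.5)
Using \ceqref{eq5.25.5} together with \ceqref{AIE} evaluated at time $t$, I will write the $\theta$-section as the Caputo solution at time $t$ plus a correction:
\[
(\T_t id_{x_0})(\theta) = x(t,x_0) + \frac{1}{\Gamma(\alpha)}\int_0^t \Big[(t+\theta-s)^{\alpha-1}-(t-s)^{\alpha-1}\Big] g(x(s))\,\d s .
\]
By \cref{lem_abB}, for $\|x_0\|\le R$ and $t\ge T^*(R)$ we have $\|x(t,x_0)\|\le\delta^*$. This produces the $\delta^*$ term, so it remains to show that the integral is bounded by $c\theta^\alpha$ once $t$ is large.

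Since $g$ is globally Lipschitz (or just locally Lipschitz on bounded sets), it has linear growth, so $\|g(y)\|\le L\|y\|+\|g(0)\|$. I split the integral at $s=T^*$.

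On the range $s\in[T^*,t]$ we have $\|x(s)\|\le\delta^*$, hence $\|g(x(s))\|\le M^*:=L\delta^*+\|g(0)\|$, and $M^*$ is independent of $R$. The kernel difference is negative, so its absolute value is $(t-s)^{\alpha-1}-(t+\theta-s)^{\alpha-1}$. Integrating over $[0,t]$ gives
\[
\frac{t^\alpha-(t+\theta)^\alpha+\theta^\alpha}{\alpha}\le \frac{\theta^\alpha}{\alpha}.
\]
This piece is therefore at most $M^*\theta^\alpha/\Gamma(\alpha+1)$.

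On the range $s\in[0,T^*]$, the energy inequality \ceqref{eq_K} gives $\|x(s)\|^2\le R^2+a/b$, so $\|g(x(s))\|\le M_R$. The kernel difference there is bounded by
\[
\frac{(t-T^*)^\alpha-(t+\theta-T^*)^\alpha - t^\alpha+(t+\theta)^\alpha}{\alpha}.
\]
This quantity is at most $\theta^\alpha/\alpha$, but that bound multiplies $M_R$, which depends on $R$, and this is the obstacle. I will instead use that the quantity tends to $0$ as $t\to\infty$ for each fixed $\theta$, since it is bounded by $T^*(1-\alpha)(t-T^*)^{\alpha-1}\theta/\alpha$ via the mean value theorem on $\partial_t$ of the difference. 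Combining the two estimates $\min\{\theta^\alpha,\ C_R(t-T^*)^{\alpha-1}\theta\}$, I choose $T(R)$ so large that $M_R T^*(t-T^*)^{\alpha-1}\le 1$. Then the piece is at most $\min\{M_R\theta^\alpha,\ \theta\}\le\theta^\alpha$ for $\theta\le1$. For $\theta\ge1$ I need a uniform bound independent of $R$. Here I will use instead that the full early-time kernel integral is at most $T^*\,(t-T^*)^{\alpha-1}$, which is at most $1/M_R$ for $t\ge T(R)$, so the piece is $\le 1\le\theta^\alpha$.

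Putting everything together gives $\|(\T_t id_{x_0})(\theta)\|\le\delta^*+(M^*/\Gamma(\alpha+1)+1/\Gamma(\alpha))\theta^\alpha$ for all $t\ge T(R)$, with $c$ independent of $R$. The main difficulty is precisely the early-time piece, where the $R$-dependence must be absorbed by decay in $t$ rather than in $\theta$; the splitting by $\theta\le1$ versus $\theta\ge1$ handles it.
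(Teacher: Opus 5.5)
The paper gives no proof of this lemma. It is quoted from Doan and Kloeden \cite{DK24} (Lemma 3 there), where $g$ is assumed globally Lipschitz. Your argument is a correct, self-contained proof, and it takes a different route from the closest estimate in the paper.

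Your proof only uses boundedness of $g$ on bounded sets, so it also holds under the paper's local Lipschitz hypothesis. The linear-growth claim is not needed, and it fails for merely locally Lipschitz $g$. Instead, put $M^*:=\sup_{B^*}\|g\|$ and let $M_R$ be the supremum of $\|g\|$ over the ball of radius $\sqrt{R^2+a/b}$.

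The nearest argument in the paper is the estimate \eqref{eq6.7.11} from the proof of \cref{prop_main2}. For $\theta_2=0$ it reads
\[
\big\|(\T_t id_{x_0})(\theta)-x(t,x_0)\big\|\le \frac{G_*}{\alpha\Gamma(\alpha)}\theta^\alpha+\frac{2G_R t^{\alpha-1}}{\Gamma(\alpha)}\theta,\qquad t\ge 2T^*.
\]
There the memory integral is split at $s=t/2$, and the whole interval $[0,t/2]$ is charged to $G_R$. The $R$-dependent term is therefore linear in $\theta$, so at fixed $t$ it is not $O(\theta^\alpha)$ as $\theta\to\infty$. That route yields the lemma only on bounded $\theta$-ranges. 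This suffices for the local H\"older bound of \cref{prop_main2}, and for \cref{thm_r} after letting $t_n\to\infty$.

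Your split at the fixed time $s=T^*(R)$ charges only $[0,T^*]$ to $M_R$. This produces the $\theta$-independent bound $M_RT^*(t-T^*)^{\alpha-1}$, which decays in $t$. Combined with your bound linear in $\theta$, the early piece becomes an $R$-independent multiple of $\min\{1,\theta\}\le\theta^\alpha$. You therefore get the estimate uniformly in all $\theta\ge0$ with $c$ independent of $R$, which is exactly what the statement asserts. The same split applied to increments $\theta_1>\theta_2$ would also remove the restriction $|\theta_1-\theta_2|\le1$ in \cref{prop_main2}.

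Two small repairs are needed.
\begin{itemize}
\item The displayed early-time kernel integral has the wrong sign. It equals $\alpha^{-1}\big[t^\alpha-(t-T^*)^\alpha-(t+\theta)^\alpha+(t+\theta-T^*)^\alpha\big]\ge0$.
\item It is cleaner to bound pointwise: $(t-s)^{\alpha-1}-(t+\theta-s)^{\alpha-1}\le(1-\alpha)\theta(t-T^*)^{\alpha-2}$ for $s\le T^*$. Your stated bound $T^*(1-\alpha)(t-T^*)^{\alpha-1}\theta/\alpha$ follows from this only when $t-T^*\ge1$, so take $T(R)\ge T^*(R)+1$. Also carry into $c$ the factors $1/\Gamma(\alpha)$ and $(1-\alpha)/\alpha$ that you drop when writing ``$\le\theta$''; $c$ stays independent of $R$.
\end{itemize}
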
 
  
This indicates that  the state   $ (\T_t id_{x_{0}}) (\theta) $  at each $t$ may have  unbounded  `tail' as $\theta\to \infty$. 
Doan \& Kloeden \cite{DK24} hence  considered a subspace $\C_\alpha $ of $\C $: 
$$ 
 \C_\alpha:=\big\{ f\in \C: \|f\|_{\C_\alpha} <\infty \big \}, 
$$
endowed with the  weighted  norm
\[
 \|f\|_{\C_\alpha} := \|f(0)\| +\sum_{k=1}^\infty \frac {1} {2^k { k^\alpha} } \sup_{\theta \in [1/k, k]} \|f(\theta)\| ,
\]
and  studied the attracting set of $ \T$  in $\C_\alpha$. Unfortunately,  the space $\C_\alpha$ is not complete under the norm $d_{\C_\alpha}$, as illustrated by the following example.

\begin{example} \label{ex} 
Define a sequence $\{f_n\}_{n\in \N} $ of  functions in $C([0,\infty), \R)$ by 
\begin{equation} \notag
 f_n(\theta) :=\begin{cases}
 1-n\theta, &  0\leq \theta \leq  1/n , \\
 0, & \theta  > 1/n ,
 \end{cases}  
 \quad \text{and} \quad 
 \delta_0(\theta) :=\begin{cases}
1, & \theta=0,\\
0, & \theta>0 .
\end{cases}
\end{equation} 
Then, since  $\|f_n(\theta)-\delta_0(\theta)\| \leq 1$ for all $n\in \N$ and $\theta\geq 0$,  from the Weierstrass dominated convergence theorem it follows
\begin{align*}
\lim_{n\to \infty}  \! \(
\sum_{k=1}^\infty \frac {1} {2^k { k^\alpha} } \sup_{t\in [1/k, k]}  \! \|f_n(t)-\delta_0(t)\| \)
&= \sum_{k=1}^\infty \frac {1} {2^k { k^\alpha} } \(\lim_{n\to \infty}  \sup_{t\in [1/k, k]}  \! \|f_n(t)-\delta_0(t)\|  \) 
 \\[0.8ex]
& =0.
\end{align*}  
Therefore,  as $f_n(0)\equiv 1 =\delta_0(0)$, 
\[
 \| f_n - \delta_0\|_{\C_\alpha} =  \|f_n(0) -\delta_0(0)\| + 
\sum_{k=1}^\infty \frac {1} {2^k { k^\alpha} } \sup_{t\in [1/k, k]} \|f_n(t)-\delta_0(t)\|  \to 0
\]
 as $n\to \infty$. But   $\delta_0 \notin \C_\alpha$, so $\C_\alpha$ is not complete.
\end{example}

As a consequence, the standard global attractor or dynamical system theory does not apply directly to $\T $ on  $(\C_\alpha, d_{\C_\alpha})$.

\medskip 

We now leave  $\C_\alpha$ and come back to  the original state space $\C=C([0,\infty), \R^d)$ of $\T$. In order to study the dynamics of Caputo FDE \ceqref{cfde} via $\T$ we need choose an appropriate metric on $\C$. 
We take
\begin{equation} 
 d_{\C} (f,h) : =\|f(0)-h(0)\|+   \sum_{k=1}^\infty \frac 1{2^k}   \frac{ \sup_{\theta \in[0,k]} \|f(\theta)-h(\theta)\|}{1+\sup_{\theta\in[0,k]} \|f(\theta)-h(\theta)\|},\quad \forall f,h\in \C . \label{def_dc}
\end{equation} 
Then  the metric $ d_\C$ induces the same   topology of $\C$, i.e., the compact-open topology, as  $\varrho$. More precisely,
  \begin{lemma} \label{lem_top}
A sequence  $\{f_n\}_{n\in \N}$  in $(\C, d_{\C})$ converges to $f$    if and only if    for any  compact interval $ [0, k]\subset [0,\infty)$, $ \sup_{\theta \in [0,k] } |f_n (\theta)- f(\theta) | \to 0$ as $n\to \infty$.
\end{lemma}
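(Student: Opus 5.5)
We prove the two implications separately. Throughout, write $e_k(f,h):=\sup_{\theta\in[0,k]}\|f(\theta)-h(\theta)\|$. Then
\[
d_\C(f,h)=\|f(0)-h(0)\|+\varrho(f,h), \qquad \varrho(f,h)=\sum_{k\ge1}2^{-k}\frac{e_k(f,h)}{1+e_k(f,h)} .
\]
The key observation is that the extra term $\|f(0)-h(0)\|$ is dominated by every $e_k(f,h)$, since $0\in[0,k]$. So it adds nothing new topologically. The statement then reduces to the classical fact that $\varrho$ metrizes uniform convergence on compacts.

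\emph{Necessity.} Suppose $d_\C(f_n,f)\to0$. Fix $k\in\N$. Since every term of the series is nonnegative,
\[
2^{-k}\frac{e_k(f_n,f)}{1+e_k(f_n,f)}\le d_\C(f_n,f)\to0 .
\]
Hence $\frac{e_k(f_n,f)}{1+e_k(f_n,f)}\to0$. The map $s\mapsto s/(1+s)$ is a strictly increasing homeomorphism of $[0,\infty)$ onto $[0,1)$, so $e_k(f_n,f)\to0$. Any compact interval $[0,K]$ lies in $[0,k]$ for some integer $k\ge K$, so the uniform convergence holds on every compact interval.

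\emph{Sufficiency.} Suppose $e_k(f_n,f)\to0$ for every $k$. The first term of $d_\C$ satisfies $\|f_n(0)-f(0)\|\le e_1(f_n,f)\to0$. For the series, fix $\varepsilon>0$ and choose $N$ with $\sum_{k>N}2^{-k}<\varepsilon/2$; this is possible because each summand is bounded by $2^{-k}$. The finitely many terms with $k\le N$ each tend to zero, so for large $n$ their sum is below $\varepsilon/2$. Thus $\varrho(f_n,f)\to0$, and therefore $d_\C(f_n,f)\to0$.

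There is no real obstacle here. The only points needing care are the tail-splitting estimate in the sufficiency direction and the remark that the point evaluation at $0$ is controlled by $e_1$. One might also note that $d_\C$ is a genuine metric: the triangle inequality holds termwise, because $s\mapsto s/(1+s)$ is increasing and subadditive. With that, the sequential characterization determines the topology, which is the same compact-open topology induced by $\varrho$.
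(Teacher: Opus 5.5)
Your proof is correct and follows the same route as the paper. The paper's one-line proof notes that $d_\C(f_n,f)\to0$ if and only if $\varrho(f_n,f)\to0$, and it relies on the standard fact that $\varrho$ metrizes uniform convergence on compact intervals. You make both steps explicit: the bound $\|f(0)-h(0)\|\le e_1(f,h)$ and the tail-splitting argument for $\varrho$, which the paper leaves to the reader.
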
 
\bproof 
It suffices to notice that $d_\C(f_n, f)\to0$ if and only if $\varrho(f_n,f)\to 0$. 
\eproof

\begin{remark} \label{rmk1} 
The space $\C$ is now unbounded with respect to $d_\C$, and a set $\mathfrak D $ in $ \C$ is bounded in $d_\C$ if and only if the 0-section $\mathfrak D(0):=\{f(0): f \in \mathfrak D\}$ of $\mathfrak D$ is bounded in $\R^d$. This is important for sets of identical functions  $id_{x_0}$ with  $x_0$ in bounded subsets of $\R^d$.  Since the  Volterra semiflow  $\T$ on $\C$ carries the information of the Caputo FDE \ceqref{cfde} exactly by the 0-sections of its trajectories (see \ceqref{eq5.25.2}), this metric $d_\C$ is more admissible   to study the asymptotic behavior of \ceqref{cfde} than the standard metric $\varrho$. 
\end{remark}

In the sequel, when we talk about the metric of $\C$ we mean $d_\C$ (rather than $\varrho$).

  \section{Dynamical behavior of the Volterra semiflow} 
We now study the attractor theory for the Volterra semiflow on $(\C, d_\C)$, as a bridge to study the dynamics of Caputo FDE \ceqref{cfde}. Inspired  by \ceqref{eq5.25.2}, we only need restrict our attention   on constant functions $id_{x_0}$.  
Let
\begin{equation} \label{def_D}
 \D :=\left \{  \mathfrak D=\bigcup_{x_0 \in D_0} id_{x_0}:  \text{ $D_0$ is a bounded subset of $\R^d$}   \right \} . 
\end{equation} 
That is,  every  element $\mathfrak D $ of    universe  $\mathcal D$ is a bounded set $\mathfrak D$ in $\C$   `stretched' from a bounded set $D_0$ in $\R^d$. 
We then study the $\D$-attractor of Volterra semiflow $\T$, which is expected to attract every $\mathfrak D$ in $\D$ under  the Volterra semiflow  $\T$.  The collection  $\mathcal D$ is then  called the {\em attraction universe} of the attractor. 

We make some technical  preparations first. They will be   crucial for constructing a compact absorbing set of $\T$  later. 

  \subsection{Asymptotic equi-continuity}

 Since $g$ is locally Lipschitz and  the Caputo FDE \ceqref{cfde}  has a compact `absorbing set'  $B^*$  (given by \ceqref{eq5.25.1}), for every $R > 0$  we define the constant 
 \begin{equation} \label{def_G}
 G_R:=  \sup_{t\geq 0, \, \| x_0\|\leq R }    \sqrt{ d\, } \, \big \| g(x(t, x_0)) \big \| ,
 \end{equation} 
 where  $ d  $ is the dimension of $\R^d$.  Particularly for $x\in B^*$ (i.e., $\|x\|\leq \delta^*$,  see \ceqref{eq5.25.1}),  we define the constant 
 \begin{equation} \label{Gs}
  G_* := \sup_{x\in B^* }   \sqrt d \, \big \| g(x) \big \| .
 \end{equation} 

   The following result strengthens  Doan \& Kloeden \cite{DK24} where an $R$-dependent upper bound  was derived, while our bound here is  independent of $R$. 
   This $R$-independence lies in the center of our analysis later.

 \begin{proposition} \label{prop_main}
  For  any $R>0$ there exits a $T=T(R,\alpha)>0$   such that 
 \begin{align}  \label{eq6.13.2} 
 \bigg \| \frac{\d }{\d \theta} (\T_t id_{x_0} )(\theta) \bigg\|   \leq      \frac{ G_* }{\Gamma(\alpha)}   \(  \theta^{\alpha-1} + \theta^{\frac{\alpha-1} 2}  \)   ,\quad \forall \theta >0, 
 \end{align} 
 uniformly for all $t\geq T$ and $\|x_0\|\leq R$, where $G_*$ is the constant defined  by \ceqref{Gs}. 
 \end{proposition}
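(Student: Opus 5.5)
The plan is to differentiate the explicit formula \eqref{eq5.25.5} under the integral sign, and then split the resulting memory integral at the absorption time of \cref{lem_abB}. The recent past will produce the $\theta^{\alpha-1}$ term with the $R$-independent constant $G_*$. The distant past will be made small by taking $t$ large, and will be absorbed into the $\theta^{\frac{\alpha-1}2}$ term.

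\emph{Step 1 (derivative formula).} Fix $\theta>0$. On $s\in[0,t]$ we have $t+\theta-s\geq\theta>0$, so the kernel $(t+\theta-s)^{\alpha-1}$ is smooth in $\theta$ with derivative bounded uniformly in $s$. Moreover $s\mapsto g(x(s,x_0))$ is continuous and bounded: by \eqref{eq_K}, $\|x(s,x_0)\|^2\leq R^2+a/b$, and $g$ is locally Lipschitz, hence bounded there. This also shows $G_R<\infty$. Differentiation under the integral sign therefore gives
\[
\frac{\d}{\d\theta}(\T_t id_{x_0})(\theta)=\frac{\alpha-1}{\Gamma(\alpha)}\int_0^t (t+\theta-s)^{\alpha-2}\, g(x(s,x_0))\,\d s .
\]

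\emph{Step 2 (splitting).} Let $T^*=T^*(R)$ be as in \cref{lem_abB}. For $s\geq T^*$ we have $x(s,x_0)\in B^*$, so $\|g(x(s,x_0))\|\leq G_*/\sqrt d\leq G_*$. For $s\in[0,T^*]$ we use $\|g(x(s,x_0))\|\leq G_R$. For $t\geq T^*$, split the integral at $T^*$.

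The recent part is bounded by
\[
\frac{G_*}{\Gamma(\alpha)}\int_{T^*}^t (1-\alpha)(t+\theta-s)^{\alpha-2}\,\d s\leq \frac{G_*}{\Gamma(\alpha)}\,\theta^{\alpha-1},
\]
which yields the first term of \eqref{eq6.13.2} with the $R$-independent constant.

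For the distant part, $(1-\alpha)\int_0^{T^*}(t+\theta-s)^{\alpha-2}\d s\leq (1-\alpha)T^*(t-T^*+\theta)^{\alpha-2}$. Factor the power as
\[
(t-T^*+\theta)^{\alpha-2}=(t-T^*+\theta)^{\frac{\alpha-1}2}(t-T^*+\theta)^{\frac{\alpha-3}2}\leq \theta^{\frac{\alpha-1}2}(t-T^*)^{\frac{\alpha-3}2},
\]
using that both exponents are negative. The distant part is thus at most
\[
\frac{G_R(1-\alpha)T^*(t-T^*)^{\frac{\alpha-3}2}}{\Gamma(\alpha)}\,\theta^{\frac{\alpha-1}2}.
\]

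\emph{Step 3 (choice of $T$).} Choose $T(R,\alpha)>T^*$ so large that $G_R(1-\alpha)T^*(T-T^*)^{\frac{\alpha-3}2}\leq G_*$. This is possible because $(\alpha-3)/2<0$, provided $G_*>0$. Positivity of $G_*$ holds because if $g\equiv0$ on $B^*$, then at $\|x\|=\delta^*$ condition \eqref{assu} would give $0\leq a-b(a/b+1)=-b<0$, a contradiction. Summing the two parts then gives \eqref{eq6.13.2} for all $t\geq T$, $\|x_0\|\leq R$ and $\theta>0$.

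The main point requiring care is Step 2's handling of the early-time memory. It must be bounded by an $R$-independent multiple of a power of $\theta$ that is uniform in $\theta>0$, both near $0$ and near $\infty$. The splitting of the exponent $\alpha-2$ into $\frac{\alpha-1}2+\frac{\alpha-3}2$ is what achieves this, since all decay in $t$ is moved onto the factor $(t-T^*)^{\frac{\alpha-3}2}$. Justifying the differentiation in Step 1 is routine.
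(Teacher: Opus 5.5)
Your proof is correct and is essentially the paper's argument. You differentiate under the integral and split the memory integral into a recent part where $x(s,x_0)\in B^*$, which gives the $R$-independent term $G_*\theta^{\alpha-1}$, and an early part bounded via $G_R$, where splitting the exponent extracts $\theta^{\frac{\alpha-1}{2}}$ times a factor decaying in $t$. The differences are minor: you split at the fixed time $T^*$ rather than at $t/2$, so your early part decays like $(t-T^*)^{\frac{\alpha-3}{2}}$ instead of $(t/2)^{\frac{\alpha-1}{2}}$, and you explicitly verify $G_R<\infty$ and $G_*>0$, which the paper uses tacitly when choosing $T$.
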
 
  
 \bproof
 Take arbitrarily $\theta>0$. 
 For any $t>0$, 
 \begin{align}
 \bigg \| \frac{\d }{\d \theta} (\T_t id_{x_0} )(\theta)  \bigg\| 
 & = \frac{1}{\Gamma(\alpha)} \, \bigg \| \frac{\d }{\d \theta} \int_0^t (t+\theta-s)^{\alpha-1} g(x(s, x_0)) \, \d s \, \bigg \|  \quad \text{(by \ceqref{eq5.25.5})} \notag \\
 & = \frac{1-\alpha}{\Gamma(\alpha)}  \,  \bigg \|  \int_0^t (t+\theta-s)^{\alpha-2} g(x(s, x_0)) \, \d s \,  \bigg\| \notag  \\ 
  & \leq   \frac{1-\alpha}{\Gamma(\alpha)}  \, \bigg \|  \int_{t/2} ^t (t+\theta-s)^{\alpha-2} g(x(s, x_0)) \, \d s \,  \bigg\| \notag \\
 &\quad +  \frac{1-\alpha}{\Gamma(\alpha)} \, \bigg\|  \int_0^{t/2}  (t+\theta-s)^{\alpha-2} g(x(s, x_0)) \, \d s \,  \bigg\|   . \label{eq6.5.1}
 \end{align}

 Now, let $t\geq 2T^*(R)$ and  $ \|x_0 \| \leq R$,  then \cref{lem_abB} indicates that  $$\|x(s,x_0) \| \leq \delta^*,\quad \forall s \geq t/2, $$ so
 \begin{align}
  \left \|  \int_{t/2}^t (t+\theta-s)^{\alpha-2} g(x(s, x_0)) \, \d s  \right\| 
 & \leq G_*  \int_{t/2}^t (t+\theta-s)^{\alpha-2}  \ \d s  \quad \text{(by \ceqref{Gs})}  \notag  \\
 &=  \frac{G_*}{1-\alpha} \left[  { \theta^{\alpha-1}- \(\theta + \frac t2\)^{\alpha-1} } \right ]\notag 
 \\ 
 &<\frac{G_*}{1-\alpha} \theta^{\alpha -1}. \label{eq5.27.3} 
 \end{align}
 On the other hand, 
 \begin{align}
  \left \|  \int_0^{t/2}    (t+\theta-s)^{\alpha-2} g(x(s, x_0)) \, \d s  \right\| 
 & \leq G_R \int_0^{t/2}  (t+\theta-s)^{\alpha-2}  \ \d s  \quad \text{(by \ceqref{def_G})}   \notag
 \\ 
 &=  \frac {G_R}  {1-\alpha}    \left [ \( \frac t2 +\theta\)^{\alpha-1} -(t+\theta)^{\alpha-1} \right]  \notag \\ 
  & < \frac {G_R}  {1-\alpha}    \( \frac t2 +\theta\)^{ \frac{\alpha-1}2 } \( \frac t2 +\theta\)^{ \frac{\alpha-1}  2}   \notag\\
 &< \frac {G_R}  {1-\alpha}     \( \frac t2\)^{ \frac{\alpha-1}2 } \theta^{ \frac{\alpha-1}  2}      .  \label{eq5.27.1}
 \end{align}
 Therefore, by \ceqref{eq6.5.1}--\ceqref{eq5.27.1} it yields
\begin{equation}  \notag
 \left \| \frac{\d }{\d \theta} (\T_t id_{x_0} )(\theta) \right\|  
  \leq  \frac{G_*}{\Gamma(\alpha)} \theta^{\alpha -1} 
  + \frac {G_R}  {\Gamma(\alpha)}     \( \frac t2\)^{ \frac{\alpha-1}2 } \theta^{ \frac{\alpha-1}  2}  
 \end{equation} 
 for all $ t\geq  2T^*(R) ,$ $ \theta >0$, and $\|x_0\|\leq R$.

 Since  $\alpha-1 <0$, there exists  a   $T=T(R,\alpha)  \geq 2T^*(R) $ such  that 
 $$    {G_R}    \( \frac t2 \)^{\frac{\alpha-1}2 } \leq   {G_*}    ,\quad \forall t\geq T,$$   
 and then  \ceqref{eq6.13.2}  follows. 
 \eproof
 
 \cref{prop_main} indicates that
 \[
  \left \| \frac{\d }{\d \theta} (\T_t id_{x_0} )(\theta) \right\|        \to 0 \quad \text{as $ \theta\to \infty$} ,
  \]
  uniformly for $t\geq T_R$ and $\|x_0\|\leq R$. Intuitively, this means the `tail' of $(\T_t id_{x_0} )(\theta)$ will finally (for $t$ large) stay calm  as $\theta \to \infty$,  
 but it does not give a  bound of this $\theta$-derivative as $\theta\to 0^+$, so it remains unclear whether $(\T_t id_{x_0} )(\theta)$  can vary dramatically near the 
 `head' (as $\theta\to 0^+ $).  We now study this problem.   
 
   The following inequalities are  elementary.

\begin{lemma} \label{ineq}
For any $p > q > 0$ and $\gamma \in (0,1)$,
\[
p^{\gamma} - q^{\gamma}  < (p- q)^{\gamma}.
\]
\end{lemma}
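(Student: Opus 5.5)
The plan is to reduce the inequality to subadditivity of the concave power function $x\mapsto x^\gamma$ on $[0,\infty)$. Write $h:=p-q>0$, so that the claim becomes
\[
(q+h)^\gamma < q^\gamma + h^\gamma ,\qquad q,h>0,\ \gamma\in(0,1).
\]
This strict subadditivity is what I will prove.

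For the proof, fix $h>0$ and consider $\phi(q):=q^\gamma+h^\gamma-(q+h)^\gamma$ for $q\ge 0$. At the endpoint, $\phi(0)=h^\gamma+0-h^\gamma=0$. For $q>0$ the derivative is
\[
\phi'(q)=\gamma\bigl(q^{\gamma-1}-(q+h)^{\gamma-1}\bigr).
\]
Since $\gamma-1<0$, the map $x\mapsto x^{\gamma-1}$ is strictly decreasing on $(0,\infty)$. Hence $q^{\gamma-1}>(q+h)^{\gamma-1}$, and so $\phi'(q)>0$ for every $q>0$.

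Because $\phi$ is continuous on $[0,\infty)$, the mean value theorem on $[0,q]$ gives $\phi(q)=\phi(0)+q\,\phi'(\xi)>0$ for some $\xi\in(0,q)$. This gives exactly $(q+h)^\gamma<q^\gamma+h^\gamma$. Substituting back $q+h=p$ and $h=p-q$ yields $p^\gamma-q^\gamma<(p-q)^\gamma$.

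An alternative route, if a derivative-free argument is preferred, is homogeneity. Divide by $p^\gamma$ and set $s=q/p\in(0,1)$; the claim becomes $1<s^\gamma+(1-s)^\gamma$. This holds because $s^\gamma>s$ and $(1-s)^\gamma>1-s$ for $s\in(0,1)$, since $\gamma<1$ and both bases lie in $(0,1)$, and the two right-hand sides sum to $1$.

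The only real point to watch is strictness: the mean value step must use $\phi'>0$ on the open interval $(0,q)$, not just $\phi'\ge0$, and the alternative argument must use $s\in(0,1)$ strictly. Both are guaranteed by the hypotheses $p>q>0$.
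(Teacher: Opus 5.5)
Your proof is correct, and both of your routes work. The first is the mean value argument for $\phi(q)=q^\gamma+h^\gamma-(q+h)^\gamma$, using continuity on $[0,q]$, $\phi(0)=0$ and $\phi'>0$ on $(0,q)$. The second is the homogeneity argument $s^\gamma+(1-s)^\gamma>s+(1-s)=1$ for $s=q/p\in(0,1)$.

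The paper gives no proof of this lemma; it only calls the inequality elementary, so there is no argument of the paper's to compare against. The paper's companion Lemma~\ref{ineq2} gives the integral version of your first route in one line:
\[
p^\gamma-q^\gamma=\gamma\int_0^{p-q}(q+s)^{\gamma-1}\,\d s<\gamma\int_0^{p-q}s^{\gamma-1}\,\d s=(p-q)^\gamma ,
\]
since $s\mapsto s^{\gamma-1}$ is strictly decreasing. Your homogeneity route has the advantage of needing no calculus at all.

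The paper later applies the lemma with $\theta_2\ge 0$, so possibly $q=0$. In that case only the non-strict inequality holds, with equality. Your function $\phi$, with $\phi(0)=0$, covers this boundary case too.
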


    \begin{lemma} \label{ineq2}
       For any $p,q,\gamma>0$, 
\[
 p^\gamma -q^\gamma
 = \gamma (p-q) \int^1_0\big(q+\tau (p-q) \big)^{\gamma-1}\ \d \tau .
\]
   \end{lemma}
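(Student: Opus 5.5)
The identity in \cref{ineq2} is the fundamental theorem of calculus applied to the power function along the segment from $q$ to $p$. Set
\[
\phi(\tau):=\big(q+\tau(p-q)\big)^{\gamma},\quad \tau\in[0,1].
\]
Since $p,q>0$, every point $q+\tau(p-q)$ with $\tau\in[0,1]$ lies between $q$ and $p$, and is therefore at least $\min\{p,q\}>0$. Hence $\phi$ is continuously differentiable on $[0,1]$, and by the chain rule
\[
\phi'(\tau)=\gamma(p-q)\big(q+\tau(p-q)\big)^{\gamma-1}.
\]

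Next I would write $p^\gamma-q^\gamma=\phi(1)-\phi(0)=\int_0^1\phi'(\tau)\,\d\tau$ and pull the constant $\gamma(p-q)$ out of the integral, which is exactly the claimed formula. The case $p=q$ is trivial, since both sides vanish, and no ordering between $p$ and $q$ is needed.

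The only point that needs care is the integrability of $\phi'$. Because $q+\tau(p-q)$ stays bounded away from $0$, the integrand $\big(q+\tau(p-q)\big)^{\gamma-1}$ is continuous on $[0,1]$ even when $\gamma<1$, so the integral is an ordinary Riemann integral and there is no real obstacle. This boundedness away from zero is precisely why the hypothesis $p,q>0$ is imposed.
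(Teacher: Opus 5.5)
Your proof is correct and is essentially the paper's argument. The paper substitutes $s=q+\tau(p-q)$ to rewrite the integral as $\frac{1}{p-q}\int_q^p s^{\gamma-1}\,\d s$ and integrates, which is the same fundamental-theorem-of-calculus computation as your $\phi(1)-\phi(0)=\int_0^1\phi'(\tau)\,\d\tau$; your version has the small advantage of never dividing by $p-q$, so it also covers $p=q$ directly.
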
 
   \begin{proof}
  It suffices to notice that 
\[
   \int^1_0\big(q+\tau (p-q) \big)^{\gamma-1}\ \d \tau 
    =\frac 1{p-q} \int_q^p s^{\gamma -1} \ \d s =\frac 1{ \gamma(p-q) }  \( p^\gamma -q^\gamma\).  \qedhere 
\]
   \end{proof}

 \begin{proposition} \label{prop_main2} 
For any $R>0$ there exists a $T=T(R, \alpha)>0$ such that,    for all $ \theta_1, \theta_2\geq 0 $  with $|\theta_1  -\theta_2| \leq 1$,   
     \begin{align} \notag
  \big  \|  (\T_t id_{x_0} )(\theta_1)  - (\T_t id_{x_0} )(\theta_2)  \big  \|
 \leq \frac{2G_*}{\alpha \Gamma(\alpha)}    | \theta_1 - \theta_2| ^\alpha ,\quad \forall t\geq T,
 \end{align}   
 uniformly for all $\| x_0\|\leq R$, 
   where $G_*$   is an $R$-independent constant defined by \ceqref{Gs}.
 \end{proposition}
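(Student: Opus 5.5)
The plan is to integrate the $R$-independent derivative bound of \cref{prop_main} over $[\theta_1,\theta_2]$. I will take the same $T=T(R,\alpha)$ as in \cref{prop_main}, fix $t\geq T$ and $\|x_0\|\leq R$, and assume without loss of generality $0\leq \theta_1<\theta_2$ with $h:=\theta_2-\theta_1\leq 1$.

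First suppose $\theta_1>0$. By \ceqref{eq5.25.5}, the map $\theta\mapsto(\T_t id_{x_0})(\theta)$ is continuously differentiable on $[\theta_1,\theta_2]$. Indeed, differentiating under the integral is legitimate there, since $(t+\theta-s)^{\alpha-2}$ is bounded by $\theta_1^{\alpha-2}$ for $s\in[0,t]$ and $g(x(\cdot,x_0))$ is bounded. Hence the fundamental theorem of calculus and \ceqref{eq6.13.2} give
\begin{align*}
\big\|(\T_t id_{x_0})(\theta_2)-(\T_t id_{x_0})(\theta_1)\big\|
&\leq \frac{G_*}{\Gamma(\alpha)}\int_{\theta_1}^{\theta_2}\Big(\theta^{\alpha-1}+\theta^{\frac{\alpha-1}2}\Big)\,\d\theta\\
&=\frac{G_*}{\Gamma(\alpha)}\Big[\frac{\theta_2^\alpha-\theta_1^\alpha}{\alpha}+\frac{2}{\alpha+1}\Big(\theta_2^{\frac{\alpha+1}2}-\theta_1^{\frac{\alpha+1}2}\Big)\Big].
\end{align*}

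Next I bound each term in terms of $h$.

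For the first term, \cref{ineq} with $\gamma=\alpha$ gives $\theta_2^\alpha-\theta_1^\alpha<h^\alpha$.

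For the second term, $\gamma=\frac{\alpha+1}2\in(0,1)$, so \cref{ineq} gives $\theta_2^{\frac{\alpha+1}2}-\theta_1^{\frac{\alpha+1}2}<h^{\frac{\alpha+1}2}$. Since $h\leq1$ and $\frac{\alpha+1}2>\alpha$, we have $h^{\frac{\alpha+1}2}\leq h^\alpha$. Moreover $\frac{2}{\alpha+1}\leq\frac1\alpha$, because this is equivalent to $2\alpha\leq\alpha+1$.

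Summing the two bounds yields
\[
\big\|(\T_t id_{x_0})(\theta_2)-(\T_t id_{x_0})(\theta_1)\big\|\leq \frac{G_*}{\Gamma(\alpha)}\Big(\frac1\alpha+\frac1\alpha\Big)h^\alpha=\frac{2G_*}{\alpha\Gamma(\alpha)}|\theta_1-\theta_2|^\alpha .
\]
This bound is uniform in $t\geq T$ and $\|x_0\|\leq R$, because the right-hand side involves only $G_*$ and $\alpha$.

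The remaining case $\theta_1=0$ is the only delicate point, since the derivative bound blows up like $\theta^{\alpha-1}$ at $0$ and the fundamental theorem of calculus cannot be applied directly on $[0,\theta_2]$. I will handle it by applying the estimate above with $\theta_1=\varepsilon\in(0,\theta_2)$, where the difference $\theta_2-\varepsilon\leq1$, and letting $\varepsilon\to0^+$. Both sides converge: the left side because $\T_t id_{x_0}\in\C$ is continuous at $0$, and the right side because $(\theta_2-\varepsilon)^\alpha\to\theta_2^\alpha$. This gives the claim at $\theta_1=0$ as well.
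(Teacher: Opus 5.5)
Your proof is correct, but it takes a different route from the paper's. The paper does not use \cref{prop_main} here. It writes the increment directly as $\frac{1}{\Gamma(\alpha)}\int_0^t \eta_{\theta_1,\theta_2}(s)\, g(x(s,x_0))\,\d s$ with $\eta_{\theta_1,\theta_2}(s)=(t+\theta_1-s)^{\alpha-1}-(t+\theta_2-s)^{\alpha-1}$ and splits at $t/2$. The recent half is bounded by $\frac{G_*}{\alpha}|\theta_1-\theta_2|^\alpha$ via \cref{ineq}, and the distant half by the Lipschitz term $2G_R t^{\alpha-1}|\theta_1-\theta_2|$ via \cref{ineq2}. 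Choosing $T$ with $2G_R t^{\alpha-1}\le G_*/\alpha$ and using $|\theta_1-\theta_2|\le 1$ then gives the claim.

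You instead integrate the pointwise derivative bound \eqref{eq6.13.2}. The distant-past contribution then appears as the H\"older term $\frac{2}{\alpha+1}h^{\frac{\alpha+1}{2}}$, which you correctly dominate by $\frac{1}{\alpha}h^\alpha$ for $h\le 1$. The underlying split at $t/2$ is the same, just hidden inside \cref{prop_main}.

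\emph{What your route buys:} brevity once \cref{prop_main} is available. \emph{What it costs:} extra justification, namely differentiation under the integral, the fundamental theorem of calculus on $[\theta_1,\theta_2]$ with $\theta_1>0$, and the $\varepsilon\to0^+$ limit at $\theta_1=0$. You supply all of these correctly.

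\emph{What the paper's route buys:} it needs none of that, and it yields the intermediate bound \eqref{eq6.7.11}, whose non-H\"older part carries the explicitly decaying factor $t^{\alpha-1}$. \cref{thm_r} uses exactly this to obtain the global H\"older bound with constant $G_*/(\alpha\Gamma(\alpha))$ on the attractor. Your route recovers this too if you keep the coefficient $G_R(t/2)^{\frac{\alpha-1}{2}}$ from the proof of \cref{prop_main} rather than absorbing it into $G_*$.
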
 
 \begin{proof}
 Let $t>0$ and $  \theta_1 >\theta_2\geq 0 $ be arbitrarily given. 
 By \ceqref{eq5.25.5}, 
\begin{equation} \label{eq6.7.7}
 \big \|  (\T_t id_{x_0} )(\theta_1)  - (\T_t id_{x_0} )(\theta_2) \big \|
 =  \frac{1}{\Gamma(\alpha)} \left\| \, \int_0^t    \eta_{\theta_1,\theta_2}(s)  g(x(s,x_0)) \, \d s \, \right \|  
\end{equation} 
 with 
$
  \eta_{\theta_1,\theta_2}(s):=   (t+\theta_1 -s)^{\alpha -1} -(t+\theta_2 -s)^{\alpha -1}   . 
 $

 Recall from \cref{lem_abB} that for the given $R$   there is a $T^*=T^*(R )>0$ such that  $x(s,x_0) \in B^*$ for all $\| x_0\|\leq R$ and $s\geq T^*$. 
 Now we consider $t\geq 2 T^*  $, then 
  \ben
 \bigg \| \int_{t/2}^t   \eta_{\theta_1,\theta_2}(s)  g(x(s,x_0)) \, \d s\, \bigg \| 
&\leq G_*  \int_{t/2}^t | \eta_{\theta_1, \theta_2}(s) | \, \d s   \\ 
 &  = 
 G_*   \int_{ t/2}^t  \Big[
 (t+\theta_2 -s)^{\alpha -1} -(t+\theta_1 -s)^{\alpha -1} \Big] \,  \d s , \label{eq6.7.4} 
 \ee
 where $G_*$ is the constant defined by \ceqref{Gs}.  After  some elementary  calculation, 
 \[
 \int_{t/2}^t 
 (t+\theta -s)^{\alpha -1}  \,  \d s = \frac 1 \alpha \left[ \Big( \frac t2 +\theta \Big)^\alpha -\theta^\alpha \right ], \quad \forall \theta\geq 0, 
 \]
 and hence 
 \be
 &  G_*  \int_{t/2}^t  \Big[
 (t+\theta_2 -s)^{\alpha -1} -(t+\theta_1 -s)^{\alpha -1} \Big] \,  \d s \\
 &\quad =  \frac {G_*} \alpha \left[ \Big( \frac t2 +\theta_2 \Big)^\alpha -\theta_2^\alpha  +\theta_1^\alpha -\Big( \frac t2 +\theta_1 \Big)^\alpha \right ] \\
 & \quad\leq  \frac  {G_*} \alpha  \Big( \theta_1^\alpha - \theta_2^\alpha  \Big) \quad \text{(since $\theta_1 >\theta_2\geq 0$)} \\
 &\quad \leq \frac  {G_*} \alpha (\theta_1 -\theta_2)^\alpha  \quad \text{(by \cref{ineq})}  ,\quad t\geq 2T^*. \label{eq6.7.5} 
 \ee 

On the other hand,  still for $ t\geq 2T^*$,
 \ben
 \bigg \| \int_0^{t/2}     \eta_{\theta_1,\theta_2}(s)  g(x(s,x_0)) \, \d s \, \bigg \| 
 &   \leq
 G_R  \int_0^{t/2} \big |  \eta_{\theta_1,\theta_2}(s) \big | \, \d s  \\
  &  =  
 G_R  \int_0^{t/2}  \Big[
 (t+\theta_2 -s)^{\alpha -1} -(t+\theta_1 -s)^{\alpha -1} \Big] \,  \d s ,
 \ee
 where $G_R$ is the constant defined by \ceqref{def_G}. 
 Since 
 \[
  \int_0^{t/2} (t+\theta -s)^{\alpha -1} \, \d s = \frac 1 \alpha \left[ (t+\theta)^\alpha -\Big(\frac t2 +\theta \Big)^\alpha\right] , \quad\forall \theta\geq 0,
 \]
 it follows 
  \be
& \left\|\, \int_0^{t/2}     \eta_{\theta_1,\theta_2}(s)  g(x(s,x_0)) \, \d s \, \right \|  \\
&\quad  \leq 
 \frac{G_R}{\alpha} 
 \left[ ( t+\theta_2)^\alpha- \Big(\frac t2+\theta_2\Big)^\alpha -(t+\theta_1)^\alpha +  \Big(\frac t2+\theta_1\Big)^\alpha  \right]\\
 &\quad  \leq 
 \frac{G_R}{\alpha} 
 \left[      \Big(\frac t2+\theta_1\Big)^\alpha -\Big(\frac t2+\theta_2\Big)^\alpha   \right]  . \label{eq6.7.3}
 \ee
 By \cref{ineq2},
 \ben
   \( \frac t2+\theta_1 \)^\alpha - \(\frac t2 +\theta_2 \)^\alpha
   &= \alpha(\theta_1 -\theta_2) \int_0^1 \Big[ \Big( \frac t 2+\theta_2\Big) +\tau(\theta_1-\theta_2)\Big]^{\alpha-1} \, \d \tau \\
   &\leq  \alpha(\theta_1 -\theta_2) \( \frac t 2+\theta_2 \)^{\alpha-1} \quad \text{(taking $\tau\equiv 0$)} \\
   &\leq  \alpha \( \frac{  t}2 \)^{\alpha -1}  (\theta_1 -\theta_2) \quad \text{(since $\theta_2\geq 0$)} \\
   &\leq  2 \alpha  t^{\alpha -1}  (\theta_1 -\theta_2) .
 \ee 
   Therefore, returning to  \ceqref{eq6.7.3}, 
 \be  \label{eq6.7.10}
 \left\| \, \int_0^{t/2}     \eta_{\theta_1,\theta_2}(s)  g(x(s,x_0)) \, \d s \, \right \|  \leq {2 G_R}  t^{\alpha-1}   (\theta_1 -\theta_2) .
\ee

 Combining  \ceqref{eq6.7.7}, \ceqref{eq6.7.5} and \ceqref{eq6.7.10}  yields
 \be 
 \big \|  (\T_t id_{x_0} )(\theta_1)  - (\T_t id_{x_0} )(\theta_2) \big \|  
&=  \frac{1}{\Gamma(\alpha)} \left\| \, \int_0^t    \eta_{\theta_1,\theta_2}(s)  g(x(s,x_0)) \, \d s \, \right \|  
\\
&   \leq
 \frac {G_* }{\alpha \Gamma(\alpha )}  (\theta_1  - \theta_2 )^\alpha + \frac{2G_R t^{\alpha-1} }{\Gamma(\alpha)}(\theta_1 -\theta_2)  \label{eq6.7.11}
\ee
for all $t\geq  2T^*$.  
 This means,   there exists a $T=T(R,\alpha)\geq 2T^* $ such that 
 \[
  2G_R t^{\alpha-1}  \leq  \frac {G_* }{\alpha} ,\quad \forall t\geq T, 
 \]
 and then  
  \ben
 \big \|  (\T_t id_{x_0} )(\theta_1)  - (\T_t id_{x_0} )(\theta_2) \big \| \leq
 \frac {G_* }{\alpha \Gamma(\alpha )}    \Big[ (\theta_1  - \theta_2 )^\alpha  +(\theta_1 -\theta_2) \Big]
  ,\quad \forall t\geq  T.   \label{eq6.7.9}
\ee
If we let in addition that  $|\theta_1 -\theta_2| \leq 1$, then 
 \[
  \big \|  (\T_t id_{x_0} )(\theta_1)  - (\T_t id_{x_0} )(\theta_2) \big \|   \leq
 \frac {2G_* }{\alpha \Gamma(\alpha) }      (\theta_1  - \theta_2 )^\alpha   
  ,\quad \forall t\geq 2 T^* ,
\]
which completes the proof.
  \end{proof}

 \subsection{Constructing a compact $\D$-absorbing set} 
    
    Now we are ready to construct a compact $\D$-absorbing set for the Volterra semiflow $\T$. Continuous functions in $\C$ with property
    \begin{equation} 
      \|f(\theta_1) -f(\theta_2)\| \leq \frac {2G_* }{\alpha \Gamma(\alpha) }     |\theta_1  - \theta_2 |^\alpha \quad  \text{ for  } |\theta_1-\theta_2|\leq 1 , \label{ab_f}
      \end{equation} 
    where  $G_*$ is the constant defined by \ceqref{Gs}, 
 will play a central role. 
 
 Define
\[
 \B_0 :=\Big \{ f\in \C : \| f(0)\| \leq \delta^* \text{ and satisfies property \ceqref{ab_f}}
 \Big \} ,
\]
where $\delta^*= \sqrt{ a/b +1} $ is the radius of the ball $B^*$  in $\R^d$ defined by \ceqref{eq5.25.1}.

\begin{theorem} \label{thm_main2}     
 $\B_0$ is a $\D$-absorbing set  of the Volterra semiflow  $\T$ which is  precompact with respect to the metric $ d_\C $.  
 \end{theorem}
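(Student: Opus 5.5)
Two things have to be shown: that $\B_0$ absorbs every $\mathfrak D\in\D$, and that $\B_0$ is precompact in $(\C,d_\C)$.

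\emph{Absorption.} Fix $\mathfrak D\in\D$ and choose $R>0$ with $\|x_0\|\le R$ for every $id_{x_0}\in\mathfrak D$. Let $t\ge\max\{T^*(R),T(R,\alpha)\}$, where $T^*(R)$ comes from \cref{lem_abB} and $T(R,\alpha)$ from \cref{prop_main2}. For the $0$-section, \ceqref{eq5.25.2} gives $(\T_t id_{x_0})(0)=x(t,x_0)$. Then \cref{lem_abB} yields $\|(\T_t id_{x_0})(0)\|\le\sqrt{a/b+1}=\delta^*$. For the modulus of continuity, \cref{prop_main2} shows directly that $\T_t id_{x_0}$ satisfies \ceqref{ab_f}. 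Hence $\T_t\mathfrak D\subset\B_0$ for all large $t$. A small point needs care: the last line of the proof of \cref{prop_main2} writes ``$t\ge 2T^*$'', but what the proof actually establishes is the bound for $t\ge T$, so I will use $T(R,\alpha)$ as the absorption time.

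\emph{Precompactness.} By \cref{lem_top}, $d_\C$ induces the compact-open topology, so convergence in $d_\C$ is the same as uniform convergence on every interval $[0,k]$. The plan is an Arzelà--Ascoli argument combined with a diagonal extraction. Take a sequence $\{f_n\}\subset\B_0$.

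\begin{enumerate}
\item \emph{Equicontinuity.} Property \ceqref{ab_f} gives a uniform Hölder modulus on each $[0,k]$.
\item \emph{Uniform bound on $[0,k]$.} Chain \ceqref{ab_f} along at most $k$ steps of length $\le1$ to get $\|f_n(\theta)\|\le\delta^*+\frac{2G_*}{\alpha\Gamma(\alpha)}\lceil\theta\rceil$. This is bounded on $[0,k]$ uniformly in $n$.
\item \emph{Extraction.} Arzelà--Ascoli on $[0,1]$, $[0,2]$, and so on, with a diagonal subsequence, produces $f\in\C$ such that $f_{n_j}\to f$ uniformly on every compact interval.
\item \emph{Conclusion.} By \cref{lem_top}, $d_\C(f_{n_j},f)\to0$. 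The $0$-section term $\|f_{n_j}(0)-f(0)\|$ also goes to $0$, since it is controlled by uniform convergence on $[0,1]$.
\end{enumerate}

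Both \ceqref{ab_f} and the bound $\|f(0)\|\le\delta^*$ pass to pointwise limits, so $f\in\B_0$. Thus $\B_0$ is in fact closed as well as precompact, and therefore compact.

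The main obstacle is step 2, where the Hölder property, valid only for $|\theta_1-\theta_2|\le1$, must be turned into a bound on every $[0,k]$. This bound grows with $k$, which is enough here because local compactness on each $[0,k]$ is all the compact-open topology requires. The rest is routine bookkeeping of the two absorption times $T^*(R)$ and $T(R,\alpha)$.
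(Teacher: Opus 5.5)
Your proof is correct and follows the same route as the paper. Absorption comes from \cref{lem_abB} together with \cref{prop_main2}. Precompactness comes from Arzel\`a--Ascoli, using the uniform growth bound obtained by chaining \ceqref{ab_f}; you also write out the diagonal extraction that the paper leaves to the locally compact form of the theorem.

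Your further observation is also right and slightly sharper than the paper. Both \ceqref{ab_f} and $\|f(0)\|\le\delta^*$ survive pointwise limits, so $\B_0$ is already $d_\C$-closed and therefore compact. Hence the paper's passage to $cl_\C(\B_0)$ is unnecessary, and its closing remark that $\B_0$ is not closed does not hold.
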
 
 \begin{proof} 
\emph{a) $\D$-absorption.}  We first prove that $\B_0$  absorbs every  $ \mathfrak{D} = \cup_{x_0\in D_0} id_{x_0} $ in $ \mathcal D$.  Since $D_0$ is bounded in $\R^d$,   without loss of generality we assume $ \|D_0\|\leq R$ for some $R>0$.  By \cref{lem_abB}, $D_0$ is absorbed by $B^*$, i.e., there exists a $T_1=T_1(R)>0$ such that
 \[
  \bigcup_{x_0\in D_0}  x(t, x_0) \subset  B^*,\quad \forall t\geq T_1 .
 \]
 By \ceqref{eq5.25.2},  this means 
 \begin{align*}
 (\T_t  \mathfrak{D} )(0) = \bigcup_{x_0\in D_0} ( \T_t id_{x_0} )(0)
 = \bigcup_{x_0\in D_0} x(t,x_0) 
 \subset B^*,\quad \forall t\geq T_1 ,
  \end{align*} 
  or, equivalently, 
   \begin{align*}
 \| (\T_t  \mathfrak{D} )(0)  \|  \leq \delta^*,\quad \forall t\geq T_1 .
  \end{align*} 
  
  On the other hand,  \cref{prop_main2}   indicates that there is a $T=T(R)$ such that  
  \begin{align} \notag
  \big  \|  (\T_t id_{x_0} )(\theta_1)  - (\T_t id_{x_0} )(\theta_2)  \big  \|
 \leq \frac{2G_*}{\alpha \Gamma(\alpha)}    | \theta_1 - \theta_2| ^\alpha ,\quad \forall t\geq T,
 \end{align}   
 uniformly  
  for all $id_{x_0} \in  \mathfrak{D}$ and $|\theta_1-\theta_2|\leq 1$. 
 Hence, $\B_0$ absorbs $ \mathfrak{D}$  under $\T$.
 
 \noindent 
 \emph{b) Precompactness}. Now we prove the precompactness of $\B_0$ with respect to the metric $d_\C$.  
 Let $\{ f_n\}_{n\in \N}$ be a sequence in $\B_0$, we need to show that it has a Cauchy subsequence. Since $\sup_{n\in \N} \|f_n(0)\| \leq  \delta^*$ and $\R^d$ is finite-dimensional, we have a Cauchy  subsequence $ \{ f_n(0) \}_{n\in \N}$ (after relabeling)  in $\R^d$. 
 
  In the following it suffices to prove that   $\{ f_n \}_{n\in \N} $ is precompact  in the compact-open topology of $C([0,\infty), \R^d)$ by the Ascoli--Arzel\`a  theorem. 
 First, it is clear that for every $\theta\geq 0$,   $\{ f_n (\theta) \}_{n\in \N} $ is   bounded since
 $\{ f_n (0) \}_{n\in \N} $ is   bounded and  $ \theta\mapsto \| f_n(\theta) \|$ have uniformly bounded growth rate.  Second,  the property \ceqref{ab_f} indicates that for any compact set $[a,b]\subset [0, \infty)$ the sequence  $\{ f_n \}_{n\in \N} $ is  equi-continuous on $[a,b]$.  Therefore, the Ascoli--Arzel\`a   theorem on locally compact topological spaces  ensures that $\{ f_n \}_{n\in \N} $ is precompact in the compact-open topology of $C([0,\infty), \R^d)$ and, equivalently, in the metric $d_\C$. 
 \end{proof} 
 
  Take the $ d_\C$-closure of $\B_0$, i.e.,  let
 \[
  \B := cl_{\C} (\B_0). 
 \]
 Then $\B$ is a compact absorbing set  of $\T$, which   need not belong to the universe $\D$.

  \subsection{The $\D$-attractor} 
  
  Now we study the $\D$-attractor of the Volterrra semiflow $\T$ on $\C=C\big( [0,\infty); \R^d \big) $. 
Let 
 \begin{align}
 \A :=    \left\{ f\in \C
 \left| 
 \begin{aligned}  &\ \exists \text{ a bounded sequence } \{x_{n}\}_{n\in \N} \subset \R^d \text{  and } t_n\to \infty  \\
 & \text{ such that }  d_\C(\T_{t_n} id_{x_{n}} , f )\to0 
 \text{ as } n\to \infty
 \end{aligned} \right. \right\} .
 \label{cha}  
\end{align} 
  
 \begin{theorem}\label{thm_main3}
  
 $\A$   is the $\D$-attractor of the Volterra semiflow $\T$ in $\C$,  with properties
 \begin{itemize} 
 \item[(i)] $\A$ is precompact;
 \item[(ii)]  $\A$ is invariant under $\T$, i.e., $\T_t \A =\A$ for all $t\geq 0$;
 \item[(iii)]  $\A$ is $\D$-attracting, i.e., for any $ \mathfrak D\in \D$, 
$
  \lim_{t\to \infty}  \dist_\C (\T_t \mathfrak D, \, \A  ) =0 ;
 $
 \item[(iv)] $\A$ is the minimal  set in $\C $ which is $\D$-attracting,  that is, if $\A'\subset \C$ is also $\D$-attracting then $\A\subset \A'$. 
 \end{itemize}
 \end{theorem}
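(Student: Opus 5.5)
The plan is to run the standard omega-limit argument for the set $\A$ defined in \ceqref{cha}, with the compact absorbing set $\B=cl_\C(\B_0)$ of \cref{thm_main2} doing all the compactness work.

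\emph{Step 1 (precompactness).} Take $f\in\A$, with $\T_{t_n}id_{x_n}\to f$, $\|x_n\|\le R$ and $t_n\to\infty$. The set $\mathfrak D=\bigcup_{\|x_0\|\le R}id_{x_0}$ belongs to $\D$, so by \cref{thm_main2} we have $\T_{t_n}id_{x_n}\in\B_0$ for all large $n$. Hence $f\in\B$, so $\A\subset\B$. Since $\B$ is compact, (i) follows; in fact $\A$ is also closed by a diagonal argument, and therefore compact.

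\emph{Step 2 (attraction).} Argue by contradiction. Suppose there are $\mathfrak D\in\D$, $\varepsilon>0$, $t_n\to\infty$ and $id_{x_n}\in\mathfrak D$ with $\dist_\C(\T_{t_n}id_{x_n},\A)\ge\varepsilon$. By absorption, $\T_{t_n}id_{x_n}\in\B$ for all large $n$. Compactness of $\B$ then gives a subsequence converging to some $f$, and this $f$ lies in $\A$ by \ceqref{cha}. That contradicts the assumed distance bound, which proves (iii).

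Minimality (iv) goes the same way. If $\A'$ is $\D$-attracting and $f\in\A$ is the limit of $\T_{t_n}id_{x_n}$ with $\{x_n\}$ bounded, collect the $x_n$ into one $\mathfrak D\in\D$. Then $\dist_\C(\T_{t_n}id_{x_n},\A')\to0$, so $f\in cl(\A')$. This gives $\A\subset\A'$ when $\A'$ is closed. I would state (iv) for closed sets, or read it as $\A\subset cl(\A')$, and say so in the proof.

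\emph{Step 3 (invariance).} This uses continuity of each $\T_t$ in the compact-open topology, which by \cref{lem_top} is the $d_\C$ topology, together with the semiflow property $\T_t\T_{t_n}=\T_{t+t_n}$.
\begin{itemize}
\item For $\T_t\A\subset\A$: if $\T_{t_n}id_{x_n}\to f$, then $\T_{t+t_n}id_{x_n}=\T_t\T_{t_n}id_{x_n}\to\T_tf$, so $\T_tf\in\A$.
\item For $\A\subset\T_t\A$: given $f=\lim\T_{t_n}id_{x_n}$ and $t_n>t$, the points $\T_{t_n-t}id_{x_n}$ eventually lie in the compact set $\B$. Extract a subsequence converging to some $h$. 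Then $h\in\A$, and continuity gives $\T_th=f$.
\end{itemize}

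The main obstacle is precisely this continuity of $\T_t$ on $(\C,d_\C)$, which the argument relies on. It is quoted from Doan \& Kloeden via Sell, but we need it on the possibly non-constant limit functions $h$ lying in $\B$. There $x_h$ solves \ceqref{eq_V} with a general $f$, and one must check that $x_h$ exists globally and that $x_{f_n}\to x_h$ uniformly on compacts. I would first try to justify this by a local Lipschitz estimate plus a Gronwall-type inequality for the weakly singular kernel on $[0,t+k]$, using the uniform bounds available on $\B$. If global existence of $x_h$ is delicate, I would restrict to functions arising as limits from $\B$, whose $x_h$ are limits of bounded solutions.
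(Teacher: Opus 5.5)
Your proof of (i)--(iv) is correct. It is the same standard $\omega$-limit argument, driven by the compact absorbing set $\B=cl_\C(\B_0)$, that the paper uses; the paper only records $\A\subset\B$ and cites standard attractor theory for the rest, so you are writing out what it delegates. Your caveat on (iv) is also justified. Since $\dist_\C(\cdot,\A')=\dist_\C(\cdot,cl_\C\A')$, $\D$-attraction only sees the closure of $\A'$. Replacing one point $f_0\in\A$ by a sequence converging to it gives a $\D$-attracting set that does not contain $\A$. So the correct conclusion is $\A\subset cl_\C\A'$.

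\textbf{Closedness claim.} You should drop the claim that $\A$ is closed, hence compact, ``by a diagonal argument''.
\begin{itemize}
\item Suppose $f_k\in\A$ and $f_k\to f$. Each $f_k$ is represented by a sequence with its own bound $\sup_n\|x^k_n\|\le R_k$, and nothing prevents $R_k\to\infty$.
\item The diagonal initial points $x^k_{n_k}$ may then be unbounded. In that case the diagonal sequence does not show $f\in\A$ under the definition \eqref{cha}.
\item You cannot reduce to a fixed $R$ by restarting from $B^*$. The Caputo FDE has no semiflow property, and $\T_t id_{x_0}$ is not a constant function.
\end{itemize}
This is exactly why the paper's remark after the theorem leaves the compactness of $\A$ open. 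It does not affect (i), which only asserts precompactness.

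\textbf{Continuity of $\T_t$.} The paper simply quotes continuity of $\T_t$ at non-constant limit functions from Doan--Kloeden. Your fallback does work:
\begin{itemize}
\item For $h_n=\T_{s_n}id_{x_n}$ one has $x_{h_n}(\tau)=x(s_n+\tau,x_n)\in B^*$ once $s_n\ge T^*(R)$.
\item You can therefore replace $g$ by a globally Lipschitz function that agrees with $g$ near $B^*$.
\item The weakly singular Gronwall inequality then gives continuous dependence on the forcing.
\item The limit solution stays in $B^*$, so it also solves the original equation, and by local uniqueness it is $x_h$.
\end{itemize}
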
 
 \bproof  
 Since we have constructed a compact $\D$-absorbing set $\B$, $\mathfrak A$ such defined is a  precompact subset of $\B$.  The other properties are proved as the standard global attractor theory, see, e.g., Robinson \cite{rob}, and also \cite{BCL,KY} for nonautonomous attractors. 
 \eproof
 
 \begin{remark}
 Due to the fact that the absorbing set $\B$ need not belong to the attraction universe $\D$ and thus need not be attracted by $\A$, we  have only the precompactness of $\A$, while  the compactness   of $\A$ remains open. 
 \end{remark} 
  
 \begin{theorem}\label{thm_main4}
    The minimal  attracting set $\Omega^*$ of  \ceqref{cfde}, defined by \ceqref{def_O}, is characterized by
  \[
   \Omega^* =\big \{ f(0) : f\in \A  \big \} .
 \]
     \end{theorem}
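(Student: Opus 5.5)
The plan is to prove the two inclusions separately. Both rest on the identity \ceqref{eq5.25.2}, $(\T_t id_{x_0})(0)=x(t,x_0)$, and on the fact that convergence in $d_\C$ controls the $0$-section. Indeed, by \ceqref{def_dc} we have $\|f(0)-h(0)\|\le d_\C(f,h)$, so the evaluation map $f\mapsto f(0)$ is $1$-Lipschitz from $(\C,d_\C)$ to $\R^d$.

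\emph{Inclusion $\{f(0): f\in\A\}\subset\Omega^*$.} Let $f\in\A$. By \ceqref{cha} there are a bounded sequence $\{x_n\}$ and times $t_n\to\infty$ with $d_\C(\T_{t_n}id_{x_n},f)\to0$. Then
\[
\|x(t_n,x_n)-f(0)\| = \|(\T_{t_n}id_{x_n})(0)-f(0)\|\le d_\C(\T_{t_n}id_{x_n},f)\to 0,
\]
so $x(t_n,x_n)\to f(0)$. By definition \ceqref{def_O}, this gives $f(0)\in\Omega^*$.

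\emph{Inclusion $\Omega^*\subset\{f(0): f\in\A\}$.} This is the main step, since here we must produce a full limit function in $\C$, not just a limit point in $\R^d$. Let $y\in\Omega^*$, with $\{x_n\}$ bounded, say $\|x_n\|\le R$, $t_n\to\infty$ and $x(t_n,x_n)\to y$. Set $\mathfrak D:=\bigcup_{n}id_{x_n}$; it belongs to $\D$. By \cref{thm_main2} ($\D$-absorption), there is $T=T(R)$ such that $\T_t\mathfrak D\subset\B_0$ for all $t\ge T$, so $\T_{t_n}id_{x_n}\in\B_0$ for all large $n$. By the precompactness of $\B_0$ in $d_\C$ (again \cref{thm_main2}), we can extract a subsequence $\T_{t_{n_k}}id_{x_{n_k}}$ converging in $d_\C$ to some $f\in\B=cl_\C(\B_0)\subset\C$. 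The subsequence $\{x_{n_k}\}$ is still bounded and $t_{n_k}\to\infty$, so $f\in\A$ by \ceqref{cha}. Finally, the $0$-section estimate above gives
\[
f(0)=\lim_k(\T_{t_{n_k}}id_{x_{n_k}})(0)=\lim_k x(t_{n_k},x_{n_k})=y.
\]

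Combining the two inclusions yields $\Omega^*=\{f(0):f\in\A\}$. The only delicate point is in the second inclusion: we need the uniform absorption time $T(R)$ over the whole bounded family $\{x_n\}$. This is exactly why we package the sequence into the single set $\mathfrak D\in\D$ before invoking \cref{thm_main2}.
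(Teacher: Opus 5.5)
Your proof is correct and follows essentially the same route as the paper: the same two inclusions, the first via $\|f(0)-h(0)\|\le d_\C(f,h)$ together with \eqref{eq5.25.2}, and the second by extracting a convergent subsequence of $\T_{t_n}id_{x_n}$ whose limit lies in $\A$ by \eqref{cha} and has $0$-section $y$. The paper only says ``attracted by $\A$, up to a subsequence''; you make that compactness step explicit through absorption into the precompact set $\B_0$ of Theorem~\ref{thm_main2}, which is a cleaner justification of the same step.
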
  
    \bproof 
 By \ceqref{cha}, for any $f\in \A$ there exists a bounded sequence $\{x_{n}\}_{n\in \N}  $ in $ \R^d$  and $t_n\to \infty$
 such that $d_\C(\T_{t_n} id_{x_{n}} ,\,  f )\to0$ as $n\to \infty$.   This convergence implies 
 \[
 x(t_n, x_n) = ( \T_{t_n} id_{x_{n}} )(0) \to f(0)  ,\quad \text{as $n\to \infty$.}
 \]
Hence, $f(0) \in \Omega^*$.  Therefore, $\big \{ f(0) : f\in \A  \big \}  \subset \Omega^*$. 

On the other hand, for any $ y\in \Omega^*$, there would be a bounded sequence $\{ x_n\} $ in $\R^d$ and $t_n\to \infty$ such that $x(t_n, x_n)\to y$ as $n\to \infty$.  Since $id_{x_n} $ was attracted by $\mathfrak A$, i.e.,  up to a subsequence, $\T_{t_n} id_{x_n} $  converges to some $f\in \A$.  By the previous analysis we obtain  $y= f(0)$. 
 Hence, $y\in \{f(0):f\in \A\} $ and then $\Omega^* \subset \big \{ f(0) : f\in \A  \big \} $.  
  \eproof
     
   \subsection{Regularity of the attractor} 

\begin{theorem} \label{thm_r} 
The functions consisting the $\D$-attractor  $\A$ are equi globally H\"older continuous: 
\begin{equation} \label{eq6.13.1}
\|f(\theta_1) -f(\theta_2)\|  \leq  
 \frac {G_* }{\alpha \Gamma(\alpha )} (\theta_1  - \theta_2 )^\alpha , \quad \forall f\in \mathfrak A, \, \theta_1, \theta_2\geq 0,
\end{equation} 
where $G_*$ is the constant given by \ceqref{Gs}.  
\end{theorem}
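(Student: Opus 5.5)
We pass to the limit in the pre-limit estimate \eqref{eq6.7.11} from the proof of \cref{prop_main2}. That estimate holds for \emph{all} $\theta_1>\theta_2\geq 0$, with no restriction $|\theta_1-\theta_2|\leq 1$. The point is that the extra linear term $\frac{2G_R t^{\alpha-1}}{\Gamma(\alpha)}(\theta_1-\theta_2)$ carries the factor $t^{\alpha-1}$, which vanishes as $t\to\infty$. Since every $f\in\A$ is a limit of states $\T_{t_n}id_{x_n}$ with $t_n\to\infty$, this term disappears in the limit and leaves the sharp constant $\frac{G_*}{\alpha\Gamma(\alpha)}$ globally. (The statement should be read with $\theta_1\geq\theta_2$, or with $|\theta_1-\theta_2|^\alpha$.)

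\textbf{Step 1.} Fix $f\in\A$ and $\theta_1>\theta_2\geq 0$. By the definition \eqref{cha}, there exist a bounded sequence $\{x_n\}$, say $\|x_n\|\leq R$, and times $t_n\to\infty$ such that $d_\C(\T_{t_n}id_{x_n},f)\to 0$.

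\textbf{Step 2.} By \cref{lem_top}, $d_\C$-convergence is equivalent to uniform convergence on compact intervals. In particular it gives pointwise convergence $(\T_{t_n}id_{x_n})(\theta_i)\to f(\theta_i)$ for $i=1,2$.

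\textbf{Step 3.} The constants $T^*(R)$ and $G_R$ in \eqref{eq6.7.11} depend only on $R$, so the estimate holds uniformly for $\|x_0\|\leq R$. For $n$ large we have $t_n\geq 2T^*(R)$, and \eqref{eq6.7.11} gives
\begin{equation*}
\big\|(\T_{t_n}id_{x_n})(\theta_1)-(\T_{t_n}id_{x_n})(\theta_2)\big\|\leq \frac{G_*}{\alpha\Gamma(\alpha)}(\theta_1-\theta_2)^\alpha+\frac{2G_R\, t_n^{\alpha-1}}{\Gamma(\alpha)}(\theta_1-\theta_2).
\end{equation*}

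\textbf{Step 4.} Let $n\to\infty$. Since $\alpha<1$, we have $t_n^{\alpha-1}\to 0$, while $\theta_1-\theta_2$ stays fixed, so the second term tends to $0$. The left side converges to $\|f(\theta_1)-f(\theta_2)\|$ by Step 2, and \eqref{eq6.13.1} follows. The case $\theta_1=\theta_2$ is trivial.

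The main obstacle is bookkeeping rather than analysis: we must make sure that the intermediate inequality \eqref{eq6.7.11} really was derived for arbitrary $\theta_1>\theta_2\geq 0$ and with $R$-uniform constants. It was, because the restriction to $|\theta_1-\theta_2|\leq 1$ entered only at the final step of that proof. We must also use only the pointwise convergence supplied by the compact-open topology, which suffices here.
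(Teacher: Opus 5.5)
Your proof is correct and is essentially the paper's argument. You approximate $f\in\A$ by $\T_{t_n}id_{x_n}$ with $\|x_n\|\leq R$ and apply the $R$-uniform estimate \eqref{eq6.7.11}, which holds for all $\theta_1>\theta_2\geq 0$ once $t_n\geq 2T^*(R)$; letting $n\to\infty$ kills the $t_n^{\alpha-1}$ term. The paper phrases the limit via uniform convergence on $[\theta_2,\theta_1]$ and a triangle inequality, whereas you correctly note that pointwise convergence at $\theta_1,\theta_2$ already suffices.
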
 
\bproof  
For any $f\in \A$ there exists a bounded sequence $x_n $ in $\R^d$ and $t_n \to \infty$  such that 
$d_{\C} ( \T_{t_n} id_{x_n}, f) \to 0$ as $n\to \infty$. Assume that $\sup_{n\in \N} \| x_n\| \leq R$ for some $R>0$.

Let $\theta_1 >\theta_2 \geq 0$  and $h_n:= \T_{t_n} id_{x_n}$.  The   convergence   $d_{\C} ( h_n , f) \to 0$ implies 
\[
 \sup_{\theta\in [\theta_2 , \theta_1]} \| h_n(\theta) -f(\theta)\| \to 0,\quad \text{as } n\to \infty. 
\]
In addition, by \ceqref{eq6.7.11}, 
\be 
 \big \| h_n(\theta_1)  -  h_n(\theta_2) \big \|  &
 = \big \|  (\T_{t_n} id_{x_n} )(\theta_1) - (\T_{t_n} id_{x_n} )(\theta_2) \big\| 
 \\[0.8ex]
 &\leq \sup_{\|x_0\|\leq R}  \big \|  (\T_{t_n} id_{x_0} )(\theta_1) - (\T_{t_n} id_{x_0} )(\theta_2) \big\|  \\
&  \leq
 \frac {G_* }{\alpha \Gamma(\alpha )} (\theta_1  - \theta_2 )^\alpha     + \frac{ 2G_R t_n^{\alpha-1} }{\Gamma(\alpha)}     (\theta_1 -\theta_2)  \notag
\ee
for $n$ large enough, where $G_R$ is the constant given by \ceqref{def_G}.  Hence, for $n$ large,
\ben
\|f(\theta_1) -f(\theta_2)\|  &\leq 
\|f(\theta_1) - h_n(\theta_1) \| + \| h_n(\theta_1) -h_n(\theta_2) \| + \| h_n(\theta_2) -f(\theta_2) \| \\
&\leq   \sup_{\theta\in [\theta_2, \theta_1]} \! 2\| f(\theta) -h_n(\theta)\|  + \frac {G_* }{\alpha \Gamma(\alpha )} (\theta_1  - \theta_2 )^\alpha   +  \frac{2G_R t_n^{\alpha-1} }{\Gamma(\alpha)}     (\theta_1 -\theta_2)  .
\ee
Passaging to the limit as $n\to \infty$ yields \ceqref{eq6.13.1}.  
\eproof

\begin{remark}
Compared to   the `local' H\"older continuity \ceqref{ab_f} enjoyed by the (unclosed) absorbing set $\B_0$, the H\"older continuity \ceqref{eq6.13.1}  enjoyed by the attractor $\A$  here is global with a refined coefficient,  so we have the inclusion 
 $$
 \A \subset \B_0, $$   
  which is rather nontrivial   since the  absorbing set  $\B_0$ is not closed. 
 \end{remark}


\end{document}